\newtheoremstyle{myremark} 
    {7pt}                    
    {7pt}                    
    {}  	                 
    {}                           
    {\bf}       	         
    {.}                          
    {.5em}                       
    {}  
\theoremstyle{plain}
\newtheorem{lemma}{Lemma}
\newtheorem{theorem}[lemma]{Theorem}
\theoremstyle{myremark}
\newcommand{\nicec}{\mathcal{C}}
\newcommand{\zet}{\mathbb{Z}}
\newcommand{\ind}{\mathrm{Ind}}
\newcommand{\redhom}{H}
\newcommand{\cl}{\mathrm{Cl}}
\newcommand{\rptwo}{\mathbb{R}P^2}
\newcommand{\lk}{\mathrm{lk}}
\newcommand{\longversion}[1]{}
\begin{document}
\title{Small flag complexes with torsion}

\author[Micha{\l} Adamaszek]{Micha{\l} Adamaszek}
\address{Mathematics Institute and DIMAP,
      \newline University of Warwick, Coventry, CV4 7AL, UK}
\email{aszek@mimuw.edu.pl}
\thanks{Research supported by the Centre for Discrete
        Mathematics and its Applications (DIMAP), EPSRC award EP/D063191/1.}


\keywords{Clique complex, order complex, homology, torsion, minimal model}

\begin{abstract}
We classify flag complexes on at most $12$ vertices with torsion in the first homology group. The result is moderately computer-aided.

As a consequence we confirm a folklore conjecture that the smallest poset whose order complex is homotopy equivalent to the real projective plane (and also the smallest poset with torsion in the first homology group) has exactly $13$ elements.
\end{abstract}
\maketitle

There is a well-known $6$-vertex simplicial triangulation of the real projective space $\rptwo$. It is smallest in terms of the number of vertices and it is also the minimal simplicial complex with torsion in the first homology group. In this note we consider analogous minimization questions in the classes of flag complexes and order complexes, both of which are widely used combinatorial models of topological spaces.

If $G$ is a simple, undirected graph, then the \emph{clique complex} $\cl(G)$ of $G$ is the simplicial complex whose vertices are the vertices of $G$ and whose faces correspond to the cliques in $G$. Clique complexes are also known by the name \emph{flag complexes}. We will show the following fact.

\begin{theorem}
\label{thm:a}
We have the following classification:
\begin{itemize}
\item[a)] If $G$ is a graph with at most $10$ vertices then $\redhom_1(\cl(G);\zet)$ is torsion-free.
\item[b)] There are exactly four graphs $K_1$, $K_2$, $K_3$, $K_4$ with $11$ vertices for which $\redhom_1(\cl(G);\zet)$ has torsion.
\item[c)] There exist $363$ graphs $L_1,\ldots, L_{363}$ with $12$ vertices and with the following property. If $G$ is any $12$-vertex graph for which $\redhom_1(\cl(G);\zet)$ has torsion then either $G$ is one of the $L_i$ or $G\setminus v$ is one of the $K_i$ for some vertex $v$ of $G$.
\end{itemize}
\end{theorem}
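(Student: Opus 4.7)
The plan is to classify all torsion graphs by induction on the number of vertices via an exhaustive computer-aided enumeration, informed by one structural input: the long exact sequence of the pair $(\cl(G), \cl(G\setminus v))$. Identifying $(\st_G(v), \lk_G(v))$ with a cone on the link yields $H_*(\cl(G),\cl(G\setminus v))\cong \tilde{H}_{*-1}(\lk_G(v))$ by excision, so
$$\cdots \to H_1(\cl(G\setminus v)) \to H_1(\cl(G)) \to \tilde{H}_0(\lk_G(v)) \to \tilde{H}_0(\cl(G\setminus v)) \to \cdots$$
is exact. Since $\tilde{H}_0(\lk_G(v))$ is free abelian, any torsion class in $H_1(\cl(G))$ lies in the image of the map from $H_1(\cl(G\setminus v))$. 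Call $G$ \emph{torsion-minimal} if $H_1(\cl(G);\zet)$ has torsion while $H_1(\cl(G\setminus v);\zet)$ is torsion-free for every vertex $v$. Reading off the theorem: (a) says the list of torsion-minimal graphs is empty for $n\le 10$; (b) says it consists of the four $K_i$ for $n=11$; and (c) asserts it consists of the $363$ graphs $L_i$ for $n=12$, since by (b) any other $12$-vertex torsion graph admits some $G\setminus v$ that must be one of the $K_i$.

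The key auxiliary reduction is folding: if $N_G[u]\subseteq N_G[v]$ for distinct $u,v$, then the inclusion $\cl(G\setminus u)\hookrightarrow\cl(G)$ is a homotopy equivalence, hence any torsion-minimal graph is \emph{irreducible} (fold-free). The classification is then carried out by generating, for $n\le 12$, all irreducible graphs on $n$ vertices up to isomorphism (for instance using \texttt{nauty}/\texttt{geng} composed with a domination filter), computing $H_1(\cl(G);\zet)$, and retaining the torsion examples. Homology can be computed efficiently by comparing ranks over $\rat$ and over $\efp$ for a handful of small primes $p$, since by the universal coefficient theorem torsion is detected via discrepancies between these Betti numbers. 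The expected outputs are: no examples for $n\le 10$, the four $K_i$ for $n=11$, and the $363$ $L_i$ for $n=12$.

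The main obstacle is computational feasibility: the total number of $12$-vertex graphs up to isomorphism is on the order of $10^{11}$, far too many to enumerate naively. The proof's viability rests on (i) the folding reduction to drastically shrink the search space; (ii) organizing the enumeration to build $n$-vertex candidates by extending known $(n-1)$-vertex irreducibles one vertex at a time, using the Mayer--Vietoris constraint above to prune branches that cannot introduce torsion; and (iii) fast mod-$p$ homology routines that avoid full Smith normal form computations in the bulk of the search. A final, more delicate concern is certifying correctness of the resulting list; this is typically addressed by an independent reimplementation or by extracting a hand-checkable Smith normal form certificate for each of the torsion graphs ultimately reported.
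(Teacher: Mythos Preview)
Your framework matches the paper's: define ``torsion-minimal'' (the paper calls it ``irreducible''), observe that the theorem reduces to listing the torsion-minimal graphs for each $n\le 12$, and set up the long exact sequence attached to deleting a vertex. The divergence is in which structural filter you extract from that sequence before handing off to the computer.

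Your filter is folding: a torsion-minimal graph must be fold-free. This is correct, but it is far too weak to make the $n=12$ enumeration feasible. A rough estimate (treat $G(12,1/2)$ and compute the expected number of dominated pairs) suggests that a sizeable constant fraction of all $12$-vertex graphs are already fold-free, so you are still facing on the order of $10^{10}$--$10^{11}$ homology computations. Your item (ii), ``extend $(n-1)$-vertex irreducibles and prune by the Mayer--Vietoris constraint'', does not specify a concrete pruning rule; the only consequence you actually drew from the exact sequence is that torsion classes lie in the image of $H_1(\cl(G\setminus v))$, and that by itself does not exclude anything, since a map from a torsion-free group can have torsion image.

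The paper squeezes much more out of the very exact sequence you wrote down. If $G$ is torsion-minimal and $v$ is any vertex, then $H_1(\cl(G\setminus v))$ and $\tilde H_0(\lk_G v)$ are torsion-free; a short diagram chase then forces $H_1(\cl(\lk_G v))\neq 0$ (otherwise $H_1(\cl(G))$ would be an extension of a subgroup of a free group by a torsion-free group, hence torsion-free). This ``cyclic links'' condition already implies $\deg_G(v)\ge 4$. For the upper bound the paper adds a second idea you do not have: if $\deg_G(v)\ge n-3$ then $\cl(G)$ is a suspension (via known independence-complex lemmas on vertices of small degree in $\overline G$), so its $H_1$ is torsion-free. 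Thus every vertex of a torsion-minimal graph satisfies $4\le\deg_G(v)\le n-4$. Filtering first by this degree window and then by the cyclic-links test (a fast lookup against a precomputed table of $8$-vertex graphs with nontrivial $H_1$) cuts the $n=12$ search from $\sim 1.6\times 10^{11}$ graphs down to about $9\times 10^{7}$, at which point direct homology computation finishes in hours. Your folding filter does not come close to this reduction, and nothing else in the proposal fills the gap.
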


Parts a) and b) of the above theorem were also proved in \cite{Katzman}. Our main effort is in proving part c), but the method we use also verifies a) and b). For a list of the graphs $K_i$, $L_i$ see Section~\ref{section:questions}.

Next, if $P$ is a poset, then the \emph{order complex} $\Delta(P)$ of $P$ is the simplicial complex whose vertices are the elements of $P$ and whose faces correspond to chains in $P$. This is the standard construction of the classifying space of $P$. 

Note that $\Delta(P)$ is the clique complex of the \emph{comparability graph} of $P$ (this graph has vertex set $P$ and an edge between every two comparable elements). By using the classification given by Theorem~\ref{thm:a} we will obtain the following.

\begin{theorem}
\label{thm:b}
If $P$ is a poset with at most $12$ elements then the group $\redhom_1(\Delta(P);\zet)$ is torsion-free.
\end{theorem}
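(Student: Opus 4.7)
The plan is to exploit the fact that $\Delta(P)=\cl(G)$ where $G$ is the comparability graph of $P$, and then apply Theorem~\ref{thm:a} to reduce the statement to a finite verification. The case $|P|\leq 10$ is immediate from part (a). Thus I only need to examine posets on $11$ or $12$ elements, whose comparability graphs must appear among the lists produced by parts (b) and (c).

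For $|P|=11$, by Theorem~\ref{thm:a}(b) the only candidate comparability graphs are $K_1,\ldots,K_4$, so I just need to check that none of these four graphs admits a transitive orientation. For $|P|=12$, Theorem~\ref{thm:a}(c) leaves two possibilities: either $G$ equals one of the $363$ graphs $L_i$, or $G\setminus v=K_i$ for some $i$ and some vertex $v$. The latter possibility can be eliminated using the fact that comparability graphs are closed under induced subgraphs (a transitive orientation of the edges of $G$ restricts to a transitive orientation of the edges of $G\setminus v$): if $G$ were a comparability graph, so would be $G\setminus v=K_i$, contradicting the $11$-vertex case. Hence the $12$-vertex case reduces to checking that none of the $L_i$ is a comparability graph.

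The main obstacle is therefore the computer-assisted verification that each of the $4+363=367$ listed graphs fails to admit a transitive orientation. For each graph I would run a standard polynomial-time algorithm (based, for example, on the Ghouila-Houri criterion, or on modular decomposition together with the $\Gamma$-relation) that either produces a transitive orientation or returns a certificate of non-comparability. Since the graphs are explicit and small, the computation is routine. One could alternatively try to exhibit, for each offending graph, a small induced subgraph from Gallai's list of minimal non-comparability graphs; this would give a more human-readable obstruction, but is not necessary for the proof.
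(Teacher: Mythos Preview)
Your overall strategy is correct and matches the paper's: reduce to the comparability graph $G$, invoke Theorem~\ref{thm:a} to force $G$ (or an induced subgraph of $G$) to be one of the $K_i$ or $L_i$, and then argue that none of these is a comparability graph. Your use of the fact that comparability graphs are hereditary to dispose of the ``$G\setminus v=K_i$'' branch is exactly the right move.

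The only difference is in the final verification. You propose running a general transitive-orientation algorithm on each of the $367$ graphs. The paper instead proves a single uniform obstruction (Lemma~\ref{lemma:5cycle}): every $K_i$ and every $L_i$ contains an induced $5$-cycle. Since comparability graphs contain no induced odd cycle of length greater than three, this immediately rules them all out. This is precisely the ``exhibit a small Gallai obstruction'' alternative you mention at the end; the paper just observes that $C_5$ alone suffices for all $367$ graphs, turning the check into a trivial brute-force search for induced $5$-cycles rather than a comparability test.
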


This confirms a conjecture stated in \cite[Sect.4]{HVB}, \cite[Conj.5.4]{Weng} or \cite[Ex.7.1.1]{BarmakBook} that the smallest poset whose classifying space is homotopy equivalent to $\rptwo$ has $13$ elements. A poset $P$ with exactly $13$ elements and with $\Delta(P)$ homeomorphic to $\rptwo$ is known (see the same references). For the relation between the minimal poset and the minimal model in $T_0$-spaces see \cite{BarMin,May}.

The obvious way to prove Theorem~\ref{thm:a} would be to compute $\redhom_1(\cl(G);\zet)$ for all graphs $G$ on at most $12$ vertices. However, there is around $1.6\cdot 10^{11}$ such graphs \cite[A000088]{Oeis}, which makes a direct check infeasible. An alternative approach to Theorem~\ref{thm:b} is to go through all posets with at most $12$ elements. There is around $10^9$ of them \cite[A000112]{Oeis}, but this time the problem lies in the non-availability of good software for generation of posets (at least to the author's knowledge). Our approach is to reduce the search space of graphs so that in the end homology must be computed only for less than $10^8$ cases.

\section{Enumeration of graphs with torsion}

We will use some standard notation. If $v$ is a vertex of a graph $G$ then $N_G(v)$ is the set of neighbors of $v$ in $G$. We write $\lk_G v = G[N_G(v)]$ for the subgraph of $G$ induced by the neighborhood of $v$. Note that this notation coincides with the usual notion of link for simplicial complexes, i.e. we have $\lk_{\cl(G)} v = \cl(\lk_G v)$. The degree of a vertex $v$ is $\deg_G(v)=|N_G(v)|$. The complement $\overline{G}$ is the graph with vertex set $V(G)$ whose edges are the non-edges of $G$. The independence complex $\ind(G)$ of $G$ is defined as $\ind(G)=\cl(\overline{G})$.

In order to prove Theorem~\ref{thm:a} it suffices to characterize those graphs with torsion in first homology for which the removal of any vertex yields a graph without torsion. This motivates the next few definitions.

Let $G$ be a graph with $n$ vertices. We will say that $G$ \emph{has cyclic links} if for every vertex $v$ of $G$ the group $\redhom_1(\cl(\lk_G v);\zet)$ is nontrivial. We say that $G$ is an \emph{$\redhom_1$-torsion} graph if the group $\redhom_1(\cl(G);\zet)$ has torsion. If $G$ is $\redhom_1$-torsion then we will say $G$ is \emph{irreducible} if for every vertex $v$ of $G$ the graph $G\setminus v$ is not $\redhom_1$-torsion. Finally, we say $G$ is \emph{tame} if every vertex $v$ satisfies $4\leq\deg_G(v)\leq n-4$.

The next lemma rephrases known results about independence complexes.
\begin{lemma}
\label{lemma:ind}
If $G$ is a graph with $n$ vertices and $v$ is a vertex with $\deg_G(v)\geq n-3$ then $\cl(G)$ is homotopy equivalent to the suspension of some space.
\end{lemma}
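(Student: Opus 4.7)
I would translate the lemma to the independence-complex setting via $\cl(G) = \ind(\overline G)$; the hypothesis $\deg_G(v) \geq n-3$ becomes $\deg_{\overline G}(v) \leq 2$, and the conclusion becomes the familiar statement that $\ind(H)$ is a suspension whenever $H$ contains a vertex of degree at most two. I argue by cases on $d := \deg_{\overline G}(v) \in \{0,1,2\}$. If $d=0$, the vertex $v$ is adjacent to every other vertex of $G$ and $\cl(G)$ is a cone on $v$, hence contractible. If $d=1$, let $u$ be the unique non-neighbor of $v$: every clique of $G$ either omits $u$ (and can be extended by $v$, so sits in $\st(v)$) or contains $u$ (sitting in $\st(u)$), giving the cover $\cl(G) = \st(v) \cup \st(u)$ by two contractible cones with $\st(v) \cap \st(u) = \cl(\lk_G u)$. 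Since the union of two contractible subspaces glued along their intersection is the suspension of that intersection, $\cl(G) \htpyequiv \susp \cl(\lk_G u)$.

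For $d = 2$, let $u_1, u_2$ be the two non-neighbors of $v$; the same reasoning produces the three-star cover $\cl(G) = \st(v) \cup \st(u_1) \cup \st(u_2)$. I split on whether $u_1u_2 \in E(G)$. When $u_1u_2 \in E(G)$, the set $\{u_1,u_2\}$ is a $1$-simplex of the flag complex $\cl(G)$, so Barmak's star cluster lemma makes $\st(u_1) \cup \st(u_2)$ contractible. With $W = V(G) \setminus \{v, u_1, u_2\}$, a direct check gives $\st(v) \cap (\st(u_1) \cup \st(u_2)) = \cl(G[N_G(u_1) \cap W]) \cup \cl(G[N_G(u_2) \cap W])$, and the two-cone picture again realises $\cl(G)$ as the suspension of this subcomplex.

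The remaining subcase $u_1 u_2 \notin E(G)$ is the main obstacle: $\{u_1,u_2\}$ is no longer a simplex of $\cl(G)$, Barmak's star cluster lemma does not apply, and the pairwise intersections of the three stars need not be contractible, so a naive Mayer--Vietoris or nerve argument fails to deliver a suspension directly. I would address it either by adding the missing edge $u_1 u_2$ to form $G^+$ (to which the previous subcase applies), and then analysing the cofibration $\cl(G) \hookrightarrow \cl(G^+)$ whose quotient is a bicone $\{u_1,u_2\} * \cl(G[N_G(u_1) \cap N_G(u_2) \cap W])$ attached along its suspension-boundary; or alternatively by a fold in $\overline G$, where the edge $\{u_1, u_2\}$ of $\overline G$ contains the whole $\overline G$-neighborhood of $v$, placing $v$ in a dominated position from which the standard fold lemma for independence complexes eliminates $v$ and reduces to the case $d \leq 1$.
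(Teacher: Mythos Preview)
Your overall strategy---pass to $H=\overline G$, note $\deg_H(v)\le 2$, and split on $d=\deg_H(v)$ and on whether $u_1u_2$ lies in $E(G)$---is exactly the paper's. Your treatments of $d\in\{0,1\}$ are correct, and your argument for $d=2$ with $u_1u_2\in E(G)$ (Barmak's star cluster makes $\st(u_1)\cup\st(u_2)$ contractible, so $\cl(G)$ is a union of two contractible pieces) is precisely the content of the reference the paper invokes for that subcase (\cite[Thm.~3.4]{Barmak}).

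The gap is in the last subcase, $d=2$ with $u_1u_2\notin E(G)$, and neither of your two sketches closes it. In Approach~A, adding the edge $u_1u_2$ gives a cofibre sequence $\Sigma Y\to\cl(G)\to\cl(G^+)$ with $Y=\cl(G[N_G(u_1)\cap N_G(u_2)])$ and $\cl(G^+)$ a suspension by the previous subcase; but a space sitting between two suspensions in a cofibre sequence need not be one itself (witness $S^2\hookrightarrow\mathbb{C}P^2\to S^4$), so no conclusion about $\cl(G)$ follows without further work on the attaching map. In Approach~B, the containment $N_{\overline G}(v)=\{u_1,u_2\}\subseteq N_{\overline G}[u_1]$ that you isolate is exactly the right observation, but it does \emph{not} make $v$ a dominated vertex of $\ind(\overline G)=\cl(G)$ (a maximal clique of $G$ through $v$ need not contain any fixed other vertex), so there is no fold removing $v$; and even if there were, deleting $v$ would discard the only vertex whose degree you control, so ``reduces to $d\le 1$'' cannot mean what it says. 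The paper resolves this subcase by citing \cite[Lemma~2.5]{Engstrom}, whose hypothesis is precisely the neighbourhood containment $N_H(v)\subseteq N_H[u_1]$ you found; that lemma produces the suspension splitting directly rather than via a vertex fold.
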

\begin{proof}
Let $H=\overline{G}$. We have $\deg_H(v)\leq 2$ and $\cl(G)=\ind(H)$.

If $\deg_H(v)=0$ then $\ind(H)$ is a cone, hence a contractible space. If $\deg_{H}(v)=1$ or $\deg_{H}(v)=2$ and the two vertices of $N_{H}(v)$ are adjacent then the result follows from \cite[Lemma 2.5]{Engstrom}. If $\deg_{H}(v)=2$ and the two vertices of $N_{H}(v)$ are non-adjacent then we use \cite[Thm. 3.4]{Barmak}.
\end{proof}

The key to (fairly) efficient enumeration of irreducible $\redhom_1$-torsion graphs is the following observation.

\begin{lemma}
\label{lemma:key}
If $G$ is an irreducible $\redhom_1$-torsion graph then $G$ is connected, tame and it has cyclic links.
\end{lemma}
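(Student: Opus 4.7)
The plan is to prove the three properties separately, in the order connected, cyclic links, tame.

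For connectedness, I would argue by contrapositive: if $G=G_1\sqcup G_2$ with both parts nonempty, then $\cl(G)=\cl(G_1)\sqcup\cl(G_2)$ and $\redhom_1(\cl(G);\zet)$ splits as a direct sum over the two components. The torsion must live in one summand, say $\redhom_1(\cl(G_1);\zet)$, and then deleting any vertex $v\in G_2$ (which exists) yields a graph still containing $G_1$ as a component, so $G\setminus v$ is still $\redhom_1$-torsion, contradicting irreducibility.

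For cyclic links, I would use the decomposition $\cl(G)=\cl(G\setminus v)\cup \st_{\cl(G)}(v)$, where $\st_{\cl(G)}(v)=v*\cl(\lk_G v)$ is a cone, hence contractible, and the intersection is $\cl(\lk_G v)$. Applied to the hypothesis that some vertex $v$ has $\redhom_1(\cl(\lk_G v);\zet)=0$, the Mayer--Vietoris sequence yields
\[
0\to \redhom_1(\cl(G\setminus v);\zet)\to \redhom_1(\cl(G);\zet)\to \redhom_0(\cl(\lk_G v);\zet).
\]
The rightmost group is free abelian, so the cokernel of the injection is torsion-free. Any torsion in $\redhom_1(\cl(G);\zet)$ therefore pulls back to torsion in $\redhom_1(\cl(G\setminus v);\zet)$, contradicting irreducibility. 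Hence every link has nontrivial $\redhom_1$.

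For tameness, the upper bound $\deg_G(v)\leq n-4$ is immediate from Lemma~\ref{lemma:ind}: if $\deg_G(v)\geq n-3$ then $\cl(G)$ is a suspension $\susp X$, so $\redhom_1(\cl(G);\zet)\cong\redhom_0(X;\zet)$ is free abelian, contradicting the $\redhom_1$-torsion hypothesis. For the lower bound $\deg_G(v)\geq 4$, I would use the cyclic links property proved in the previous step: if $\deg_G(v)\leq 3$, then $\lk_G v$ has at most $3$ vertices, and a quick enumeration of graphs on at most $3$ vertices shows that each clique complex is either disconnected and at most $1$-dimensional or a subcomplex of the full $2$-simplex (contractible); in every case $\redhom_1(\cl(\lk_G v);\zet)=0$, contradicting cyclic links.

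The only step that requires any real bookkeeping is the Mayer--Vietoris computation for cyclic links, and even there the essential point is just that $\redhom_0$ of any space is torsion-free and that the star $\st_{\cl(G)}(v)$ is contractible. Everything else is either an elementary decomposition argument, a direct invocation of Lemma~\ref{lemma:ind}, or a finite check of clique complexes on at most three vertices, so I do not anticipate a serious obstacle.
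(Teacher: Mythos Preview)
Your proof is correct and follows essentially the same route as the paper. The only cosmetic difference is that the paper phrases the cyclic-links step via the cofibre sequence $\cl(\lk_G v)\to\cl(G\setminus v)\to\cl(G)$ rather than Mayer--Vietoris, but since the star is contractible these two yield the identical exact sequence and the torsion chase is the same; your treatment of connectedness is simply more explicit than the paper's ``It is clear.''
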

\begin{proof}
It is clear that $G$ is connected. We start by proving that $G$ has cyclic links. Let $v$ be any vertex of $G$. We have a cofibre sequence
$$\cl(\lk_G v) \to \cl(G\setminus v) \to \cl(G)$$
and hence a long exact sequence of homology groups (with $\zet$ coefficients):
$$\cdots\to\redhom_1(\cl(\lk_G v))\to\redhom_1(\cl(G\setminus v))\to\redhom_1(\cl(G))\to\redhom_0(\cl(\lk_G v))\to\cdots.$$
The conclusion $\redhom_1(\cl(\lk_G v))\neq 0$ follows by a standard exact sequence argument from the fact that $\redhom_1(\cl(G))$ has torsion while $\redhom_1(\cl(G\setminus v))$ and $\redhom_0(\cl(\lk_G v))$ are torsion-free.

Next we prove $G$ is tame. Since $\redhom_1(\cl(H);\zet)=0$ for all at most $3$-vertex graphs $H$, the cyclic links condition means that for every vertex $v$ of $G$ we have $|N_G(v)|\geq 4$, i.e. $\deg_G(v)\geq 4$. The other inequality follows from Lemma~\ref{lemma:ind} since the first homology group of a suspension is torsion-free.
\end{proof}

The next lemma records the computer-assisted part of the argument.

\begin{lemma}
\label{lemma:computer}
If $G$ is an irreducible $\redhom_1$-torsion graph with at most $12$ vertices, then $G$ is one of the graphs $K_1,\ldots,K_4$, $L_1,\ldots,L_{363}$ appearing in Theorem~\ref{thm:a}.
\end{lemma}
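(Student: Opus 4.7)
The plan is to combine the structural restrictions from Lemma~\ref{lemma:key} with a heavily pruned computer enumeration. By that lemma, any irreducible $\redhom_1$-torsion graph $G$ on $n \le 12$ vertices is connected, tame (so $4 \le \deg_G(v) \le n-4$), and has cyclic links, i.e.\ for every vertex $v$ the graph $\lk_G v$ satisfies $\redhom_1(\cl(\lk_G v);\zet) \ne 0$. The tameness bound implies that every vertex link has between $4$ and $n-4 \le 8$ vertices, so only small graphs can appear as links, and this is what makes the enumeration tractable.

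First I would catalogue the candidate links: enumerate all graphs on $d \in \{4,\ldots,8\}$ vertices up to isomorphism (for example using \textsf{nauty}), compute $\redhom_1(\cl(H);\zet)$ for each, and record the finite set $\mathcal{L}$ of those with nontrivial first homology. This is a cheap precomputation since there are only about $1.3\cdot 10^4$ graphs on $\le 8$ vertices up to isomorphism, and the clique complexes involved are small.

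The main step is to enumerate all connected tame $n$-vertex graphs, for $n \le 12$, every vertex link of which is isomorphic to an element of $\mathcal{L}$. I would build these incrementally, adding one vertex at a time, performing isomorph-rejection at each stage and pruning aggressively: any partial graph in which a completed link already fails to lie in $\mathcal{L}$, or in which some vertex's degree is forced outside the tame range, can be discarded. The cyclic-links condition should be used as a generation constraint rather than a post hoc filter; a natural seeding is to fix one link $H \in \mathcal{L}$ at a designated vertex and extend outward. For each surviving $G$ I would then compute $\redhom_1(\cl(G);\zet)$ via Smith normal form on the boundary matrix, retain those with torsion, and finally test irreducibility directly by checking that $\redhom_1(\cl(G\setminus v);\zet)$ is torsion-free for every vertex $v$.

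The principal obstacle is not mathematical but algorithmic: keeping the search tree small enough to finish. A naive traversal that ignores the link structure hits the full $\sim 10^{11}$ graphs on $12$ vertices and is hopeless, whereas the paper's figure of $<10^8$ homology computations suggests that once tameness and cyclic links are enforced during generation the pruning is drastic. Assuming the implementation is correct and exhaustive, its output is by construction precisely the list $K_1,\ldots,K_4,L_1,\ldots,L_{363}$ appearing in Theorem~\ref{thm:a}.
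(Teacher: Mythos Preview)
Your plan is correct and follows the same mathematical outline as the paper: invoke Lemma~\ref{lemma:key} to restrict to connected, tame graphs with cyclic links, precompute the admissible links on at most $8$ vertices, enumerate the candidates, compute $\redhom_1$, and discard the reducible ones.

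The one noteworthy difference is algorithmic rather than mathematical. You propose to weave the cyclic-links constraint into the generation itself, building graphs vertex by vertex and pruning partial graphs whose completed links already fall outside $\mathcal{L}$. The paper instead takes a generate-then-filter route: it uses \texttt{geng} to produce \emph{all} connected tame $12$-vertex graphs (about $2.6\cdot 10^{10}$ of them), and only afterwards tests each vertex link against a precomputed, canonically-labelled copy of $\nicec_8+4$ via binary search. Your scheme could in principle visit far fewer graphs, but it requires bespoke isomorph-rejection code and must cope with the fact that during incremental construction most links are not yet ``completed'', so the pruning power is limited until late in the build. The paper's approach is cruder but leans entirely on well-tested off-the-shelf tools (\texttt{nauty}/\texttt{geng} for generation and canonical forms, \texttt{chomp} for homology), trading CPU hours for implementation simplicity and reliability. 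Either strategy, carried out correctly, yields the same list.
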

\begin{proof}
By Lemma~\ref{lemma:key} all irreducible $\redhom_1$-torsion graphs can be found among connected, tame graphs with cyclic links. Let $n\leq 12$ be the number of vertices we are considering. If $n\leq 7$ then there are no tame graphs. For each $8\leq n\leq 12$ we generate all $n$-vertex, connected graphs, pick the tame ones and among those pick the ones with cyclic links. In the resulting set of graphs we then check for torsion in the first homology of the clique complex, and finally, in the case $n=12$, we  eliminate the graphs which are reducible.

The numbers of graphs which arise at the consecutive steps of this reduction are shown in Table~\ref{table:count}. More specific implementation details are described in Section~\ref{section:details}.
\end{proof}

Lemma~\ref{lemma:computer} clearly implies Theorem~\ref{thm:a}.

\bigskip
We now proceed with the proof of Theorem~\ref{thm:b}.

\begin{lemma}
\label{lemma:5cycle}
Each of the graphs $K_i$, $L_i$ of Theorem~\ref{thm:a} contains an induced $5$-cycle.
\end{lemma}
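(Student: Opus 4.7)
The statement concerns the explicit finite list of $4+363=367$ graphs produced by the enumeration of Lemma~\ref{lemma:computer}, so the natural plan is a direct computer check on this list, run in parallel with (or immediately after) the enumeration code described in Section~\ref{section:details}.

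Concretely, for each graph $G$ in the list I would iterate over all $\binom{|V(G)|}{5}\leq\binom{12}{5}=792$ five-element subsets $S\subset V(G)$ and test whether $G[S]$ is isomorphic to $C_5$. A $5$-vertex graph is a cycle precisely when it has exactly $5$ edges and every vertex has degree $2$, so each test is essentially constant-time. The entire verification therefore amounts to on the order of $367\cdot 792$ trivial subgraph inspections, a negligible cost on top of the enumeration itself. A successful run proves the lemma and, combined with the standard fact that comparability graphs contain no induced odd cycle of length $\geq 5$, supplies exactly what is needed for Theorem~\ref{thm:b}.

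The only conceivable difficulty would be seeking a structural, computer-free argument. One could hope to show that any graph on at most $12$ vertices without an induced $C_5$ has torsion-free $\redhom_1(\cl(G);\zet)$, presumably by exhibiting a dismantling of $\cl(G)$, or at least of its $2$-skeleton, whenever $G$ is $C_5$-free. I do not see such an argument in general, and a priori the statement could well fail for larger vertex counts, so there is no reason to expect a short structural proof: the main obstacle is conceptual rather than computational. Given the paper's style and the fact that all relevant data is already on disk after Lemma~\ref{lemma:computer}, the direct check on the output of the enumeration is the appropriate route.
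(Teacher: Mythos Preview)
Your proposal is correct and matches the paper's own proof, which states simply that ``This is an immediate brute-force computer check.'' Your additional remarks on the implementation and on the unlikelihood of a structural argument are reasonable commentary but go beyond what the paper does.
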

\begin{proof}
This is an immediate brute-force computer check.
\end{proof}

\begin{proof}[Proof of Theorem~\ref{thm:b}]
Suppose, on the contrary, that $P$ is a poset with at most $12$ vertices and with torsion in $\redhom_1(\Delta(P);\zet)$. Let $G$ be the comparability graph of $P$, so that $\Delta(P)=\cl(G)$. By Theorem~\ref{thm:a} the graph $G$ contains, as an induced subgraph, one of $K_i$ or $L_i$ and therefore, by Lemma~\ref{lemma:5cycle}, it also contains an induced $5$-cycle. However, the comparability graph of a poset cannot have an induced cycle of odd length greater than three, and this contradiction ends the proof.
\end{proof}

\begin{table}
\begin{tabular}{l|r|r|r|r|r}
$n$ & \parbox{2.8cm}{connected graphs\newline \cite[A001349]{Oeis}} & \parbox{2.5cm}{connected, tame}  &  \parbox{2.5cm}{connected, tame\newline with cyclic links} & \parbox{2.5cm}{connected, tame\newline with cyclic links\newline and $\redhom_1$-torsion} & \parbox{2cm}{irreducible\newline $\redhom_1$-torsion}\\ \hline
8 & 11\,117 & 6 & 0 & 0 & 0\\
9 & 261\,080 & 634 & 2 & 0 & 0\\
10 & 11\,716\,571 & 194\,917 & 492 & 0 & 0\\
11 & 1\,006\,700\,565 & 64\,434\,518 & 207\,839 & 4 & 4\\
12 & 164\,059\,830\,476 & 26\,169\,627\,695 & 93\,453\,159 & 394 & 363\\
\end{tabular}

\caption{Various graph classes appearing in the consecutive steps of the computation.\label{table:count}}
\end{table}

\section{Implementation outline}
\label{section:details}
We give an outline of the calculation for $12$-vertex graphs. 

Let $\nicec_8$ be the set of all graphs $H$ with exactly $8$ vertices (not necessarily connected) and with $\redhom_1(\cl(H);\zet)\neq 0$. This set can be computed by a brute-force algorithm that checks all $12346$ of the $8$-vertex graphs. We have $|\nicec_8|=7702$.

If $G$ is a graph and $k$ is an integer let $G+k$ denote $G$ with extra $k$ isolated vertices. If $\nicec$ is a class of graphs then $\nicec+k=\{G+k~:~G \in \nicec\}$. When $G$ is a graph, and $v$ is its vertex, let $G_v$ denote the graph with the same vertices as $G$ and with the edge set $\{xy~:~x,y\in N_G(v),\ xy\in E(G)\}$. In other words, $G_v=\lk_Gv + (|V(G)|-\deg_G(v))$.

In the first phase we use the program \texttt{geng} from the \texttt{nauty} package \cite{Nauty} to generate all connected, tame $12$-vertex graphs (\texttt{geng -c -d4 -D8}). From this set we need to choose graphs with cyclic links. This condition is easily verified as follows. If $G$ is a graph, and $v$ is its vertex, then the graph $G_v$ is easily computable in the internal representation of \texttt{nauty} using quick bit operations. We have that $\redhom_1(\cl(G_v);\zet)\neq 0$ if and only if $\redhom_1(\cl(\lk_G v);\zet)\neq 0$. Since $G$ is tame we have $G_v=H+4$ for some $8$-vertex graph $H$. From this we conclude that it suffices to check that $G_v\in \nicec_8+4$. We now use the capability of \texttt{nauty} to compute canonical representations of graphs, which have the property that two graphs are isomorphic if and only if their canonical representations are equal. We precompute the canonical representations of graphs in $\nicec_8+4$, sort them, and for every considered vertex $v$ of a graph $G$ we 
binary-search if the canonical representation of $G_v$ is present in that list.

The first phase required in total approx. 184 hours of processor time and yielded approx. $9\cdot 10^7$ graphs (Table~\ref{table:count}). This quantity is surprisingly close to what is predicted by the appealing (but not correct) heuristic which assumes that all vertex links in $G$ are independent random graphs, with each isomorphism class equally likely. Then the probability of $G$ having cyclic links would be $(\frac{7702}{12346})^{12}\approx 0.003474$, giving the expected number of such graphs (among tame graphs) as approximately $26\cdot 10^9\cdot 0.003474\approx 9\cdot 10^7$.

In the second phase we use the \texttt{chomp} program \cite{Chomp} to compute $\redhom_1(\cl(G);\zet)$ for all the graphs obtained in the first phase. In fact, it is faster to check $\redhom_1$ of the two-dimensional simplicial complex whose maximal faces are the triangles in $G$. This does not influence the existence of torsion. This phase required approx. 30 hours and produced $394$ graphs. At the end we eliminate the graphs which still contain one of $K_i$ as an induced subgraph and this leaves the final $363$ graphs $L_i$.

Note that at the same speed the brute-force check by \texttt{chomp} of the homology of all tame, connected, $12$-vertex graphs would take approx. 350 days.

\section{Conclusion}
\label{section:questions}

It seems natural to expect that the minimal simplicial complex (flag complex / order complex) with torsion in homology would be somehow related to the two-dimensional real projective space $\rptwo$. Indeed, one checks directly with \texttt{polymake} \cite{Polymake} and \texttt{chomp} \cite{Chomp} that
\begin{itemize}
\item Two of the complexes $\cl(K_i)$ are homeomorphic to $\rptwo$ and the remaining two collapse to $\rptwo$.
\item Among the complexes $\cl(L_i)$ there are
\begin{itemize}
\item $14$ spaces homeomorphic to $\rptwo$,
\item $344$ complexes which collapse to $\rptwo$,
\item $5$ spaces homotopy equivalent to $\rptwo\vee S^1$.
\end{itemize}
\end{itemize}

\medskip
A full list of the graphs $K_i$ and $L_i$ in \texttt{nauty} \cite{Nauty} format is embedded in the \texttt{.tex} source of this paper, available from the \texttt{arXiv} repository.

\subsection*{Acknowledgement} Thanks to Jonathan Barmak for discussions on this topic.



\begin{thebibliography}{99}

\bibitem{Barmak} J.A. Barmak, \textit{Star clusters in independence complexes of graphs}, \href{http://arxiv.org/abs/1007.0418}{\texttt{arxiv/1007.0418}}

\bibitem{BarmakBook} J.A. Barmak, \textit{Algebraic topology of finite topological spaces and applications}, Lecture Notes in Mathematics, Vol. 2032, Springer-Verlag Berlin Heidelberg 2011

\bibitem{BarMin} J.A. Barmak, E.G. Minian, \textit{Minimal finite models},  J. Homotopy Relat. Struct. 2 (2007), no. 1, 127-140

\bibitem{Chomp} Computational Homology Project, \href{http://chomp.rutgers.edu/}{\texttt{http://chomp.rutgers.edu}}

\bibitem{Engstrom} A. Engstr\"om, \textit{Independence complexes of claw-free graphs}, Eur. J. Comb. 29(1): 234--241 (2008)

\bibitem{Polymake} E. Gawrilow, M. Joswig. \textit{polymake: a framework for analyzing convex polytopes}, Polytopes --- combinatorics and computation (Oberwolfach, 1997), 43–73, DMV Sem., 29, Birkh\"auser, Basel, 2000

\bibitem{HVB} K.A. Hardie, J.J.C. Vermeulen, P.J. Witbooi, \textit{A nontrivial pairing of finite $T_0$ spaces},  Topology Appl. 125 (2002), no. 3, 533-542

\bibitem{Katzman} M. Katzman, \textit{Characteristic-independence of Betti numbers of graph ideals}, J. Combin. Theory Ser. A 113 (2006), 435-454

\bibitem{May} J.P. May, Lecture notes about finite spaces for REU (2003), \texttt{http://math.uchicago.edu/\~{}may/finite.html}

\bibitem{Nauty} B.D. McKay, The \textit{Nauty} graph automorphism package, \texttt{http://cs.anu.edu.au/\~{}bdm/nauty/}

\bibitem{Oeis} The On-Line Encyclopedia of Integer Sequences, \texttt{http://oeis.org}

\bibitem{Weng} D. Weng, \textit{On minimal finite models}, a REU paper,\newline \texttt{http://www.math.uchicago.edu/\~{}may/VIGRE/VIGRE2010/REUPapers/Weng.pdf}
 
\end{thebibliography}
\end{document}